\documentclass[10pt]{amsart}
\usepackage{amsmath,amssymb,amscd,graphicx,amsthm,amsfonts,amstext,mathrsfs,upgreek,enumitem, stmaryrd, setspace, multicol, color, mathtools, bbm, dsfont, turnstile,pb-diagram, flafter, float, mathtools, verbatim}

\usepackage
[a4paper, margin=1.2in]{geometry}
\usepackage{hyperref}

\hypersetup{colorlinks=true}


\renewcommand{\L}{\mathcal{L}}

\newcommand{\E}{\mathcal{E}}
\newcommand{\M}{\mathcal{M}}

\newcommand{\PP}{\mathbb{P}}

\newcommand{\hull}[2]{\text{Hull}^{#1}(#2)}

\newcommand{\HHH}[1]{\mathrm{H}({#1})}



\newcommand{\w}{\omega}


\newcommand{\id}{\operatorname{id}}
\newcommand{\cp}[1]{\operatorname{crit}(#1)}


\newcommand{\otp}[1]{{{\rm{otp}}\left(#1\right)}}

\newcommand{\range}{\operatorname{ran}}

\newcommand{\Col}{\operatorname{Coll}}

\newcommand{\ult}[1]{\operatorname{Ult}(#1)}

\newcommand{\res}[2]{#1\!\upharpoonright\!{#2}}


\newcommand{\On}{{\mathrm{Ord}}}
\newcommand{\card}[1]{|#1|}

\DeclareMathOperator*{\bigdoublevee}{\bigvee\mkern-12mu\bigvee}


\newcommand{\ZFC}{{\sf ZFC}}

\newcommand{\OR}{{\sf OR}}



\newcommand{\ran}[1]{{{\rm{rng}}(#1)}}
\newcommand{\dom}[1]{{{\rm{dom}}(#1)}}

\newcommand{\LL}{L}


\newtheorem{theorem}{Theorem}[section]
\newtheorem{lemma}[theorem]{Lemma}

\newtheorem{proposition}[theorem]{Proposition}

\newtheorem{question}[theorem]{Question}

\newtheorem{claim}{Claim}
\newtheorem*{claim*}{Claim}
\newtheorem*{subclaim*}{Subclaim}

\theoremstyle{definition}
\newtheorem{definition}[theorem]{Definition}

\theoremstyle{remark}

\newenvironment{enumerate-(1s)}{\begin{enumerate}[label={\upshape (\arabic{*}*)}]}{\end{enumerate}}
\newenvironment{enumerate-(is)}{\begin{enumerate}[label={\upshape (\roman{*})*}]}{\end{enumerate}}
\newenvironment{enumerate-(iss)}{\begin{enumerate}[label={\upshape (\roman{*}**)}]}{\end{enumerate}}

\newenvironment{enumerate-(a)}{\begin{enumerate}[label={\upshape (\alph*)}]}{\end{enumerate}}

\newenvironment{enumerate-(5a)}{\begin{enumerate}[label={\upshape (6-\alph*)}]}{\end{enumerate}}

\newenvironment{enumerate-(a)-r}{\begin{enumerate}[label={\upshape (\alph*)}, resume]}{\end{enumerate}}

\newenvironment{enumerate-(A)}{\begin{enumerate}[label={\upshape (\Alph*)}]}{\end{enumerate}}

\newenvironment{enumerate-(A)-r}{\begin{enumerate}[label={\upshape (\Alph*)}, resume]}{\end{enumerate}}

\newenvironment{enumerate-(i)}{\begin{enumerate}[label={\upshape (\roman*)}]}{\end{enumerate}}

\newenvironment{enumerate-(i)-r}{\begin{enumerate}[label={\upshape (\roman*)},resume]}{\end{enumerate}}

\newenvironment{enumerate-(I)}{\begin{enumerate}[label={\upshape (\Roman*)},]}{\end{enumerate}}

\newenvironment{enumerate-(I)-r}{\begin{enumerate}[label={\upshape (\Roman*)},resume]}{\end{enumerate}}

\newenvironment{enumerate-(1)}{\begin{enumerate}[label={\upshape (\arabic*)}]}{\end{enumerate}}

\newenvironment{enumerate-(1)-r}{\begin{enumerate}[label={\upshape (\arabic*)},resume]}{\end{enumerate}}



\definecolor{teal2}{rgb}{0.036, 0.512, 0.512}

\usepackage{hyperref}
\hypersetup{colorlinks=true}
\hypersetup{citecolor=teal2,linkcolor=teal2}

\begin{document}
\nocite{*} 

\bibliographystyle{acm}



\author{Fabiana Castiblanco}\thanks{Both authors gratefully acknowledge support from the SFB 878 program ``\emph{Groups, Geometry \& Actions}'' financed by the DFG (Deutsche Forschungsgemeinschaft).}

\address{Fabiana Castiblanco, Institut f\"ur Matematische Logik und Grundlagenforschung, Universit\"at M\"unster, 
Einsteinstra{\ss}e 62, 48149 M\"unster, Germany}
\email{fabi.cast@wwu.de}

\author{Ralf Schindler}
\address{Ralf Schindler, Institut f\"ur Matematische Logik und Grundlagenforschung, Universit\"at M\"unster, 
Einsteinstra{\ss}e 62, 48149 M\"unster, Germany}
\email{rds@wwu.de}


\title{When is a given real generic over $L$?}

\begin{abstract} 
In this paper we isolate a new criterion for when a given real $x$ is generic over $L$ in terms of $x$'s capability of lifting elementary embeddings of initial segments of $L$.  \end{abstract} 

\maketitle

\setcounter{tocdepth}{1}

\section{Introduction}

The results established in \cite{cast} show that the property of closure under sharps for reals is preserved by certain tree forcing notions such as Sacks, Silver, Mathias, Miller and Laver by proving that enough ground-model embeddings $j:L[x]\to L[x]$, $x\in{}^\w\w$,  lift to the generic extension.  Here we turn our attention to a related problem:

\begin{question}Suppose that $0^\#$ exists.   Assume that $x\in{}^\w\w$ is such that every elementary embedding  $j:\LL\to \LL$ lifts to $j^*: \LL[x]\to \LL[x]$.  Is $x$ set generic over $\LL$?\end{question}

In this paper we will characterize the reals $x\in V$ which are generic by set forcing over $L$ by means of their capability to lift partial elementary embeddings of $L$ (see theorem \ref{converse}).  In order to prove our result, we present a version of Woodin's extender algebra for (partial) extenders which exist in $\LL$, and also we introduce the notion of ``weak Woodiness,'' which turns out not to be a large cardinal concept at all.

\section{Woodin's extender algebra, modified, and Bukowsk\'{y}'s Theorem}\label{woodinmodified}

\begin{definition}For a regular cardinal $\delta\geq\w_1$ and an ordinal $\mu\leq\delta$ let $\mathscr{L}_{\mu,\,\delta}$ be the least infinitary language 
which has constants $\check{\xi}$, all $\xi<\mu$, as well as $\dot{a}$, and which has
atomic formulae ``$\check{\xi}\in\dot{a}$", $\xi<\mu$, and is closed under 
negation and disjunction of length $< \delta$, i.e.,  
\begin{enumerate-(i)}\item if $\phi \in \mathscr{L}_{\mu,\,\delta}$, then $\neg \phi \in \mathscr{L}_{\mu,\,\delta}$, and 
\item if $\theta < \delta$ and $\phi_\alpha \in \mathscr{L}_{\mu,\,\delta}$ for all $\alpha<\theta$, then $\bigdoublevee_{\alpha<\theta}\phi_\alpha\ \in \mathscr{L}_{\mu,\,\delta}$.
%
\end{enumerate-(i)}
\end{definition}

Each $x\subseteq \mu$, $x$ not necessarily in $V$, defines a model for the logic $\mathscr{L}_{\mu,\,\delta}$.  Given $\varphi \in \mathscr{L}_{\mu,\,\delta}$ we may define the meaning of $x\models \varphi$ recursively:
\begin{enumerate-(1)}\item  $x\models ``\dot{\xi}\in\dot{x}$\,'' \, if and only if\:  $\xi\in x$,
\item $x\models\neg\varphi$\, if and only if\: $x\not\models\varphi$, and
\item $x\models \bigdoublevee\Gamma$\, if and only if\: $x\models \varphi$ for some $\varphi\in\Gamma$,  where  $\Gamma$ is an enumeration of formulas in $\mathscr{L}_{\mu,\,\delta}$ of length $<\delta$.
\end{enumerate-(1)}In this setting, notice that the statement $``x\models\varphi$'' is absolute between transitive models of $\ZFC$ containing $x$ and $\varphi$.

\begin{definition}Let $\PP\in V$ be a forcing notion and suppose that $g$ is $\PP$-generic over $V$.  \: For $\varphi \in \mathscr{L}_{\mu,\,\delta}$, let \[A^g_\varphi=\{x\in\wp(\mu)\cap V[g]: x\models \varphi\}\] Further, if $\Gamma\subseteq \mathscr{L}_{\mu,\,\delta}$ is a theory we set $A^g_\Gamma=\{x\in\wp(\mu)\cap V[g]: \text{ for all $\varphi\in \Gamma$, $x\models\varphi$}\}.$  \end{definition}
\begin{definition}Let $\PP\in V$ be a forcing notion.\:  We say that a theory $\Gamma$ in $\mathscr{L}_{\mu,\,\delta}$  is \emph{consistent} if and only if $A^g_\Gamma\neq\varnothing$ for some $\PP$-generic filter $g$.
If $\Gamma \cup \{ \phi \} \subset \mathscr{L}_{\mu,\,\delta}$, then we write
$\Gamma\vdash \varphi$ iff $\Gamma \cup \{ \neg \phi \}$ is inconsistent.
\end{definition}


\begin{lemma}For every theory $\Gamma\subseteq \mathscr{L}_{\mu,\,\delta}$ and every $\varphi\in \mathscr{L}_{\mu,\,\delta}$ the following are equivalent:
\begin{enumerate-(1)}\item $\Gamma\vdash \varphi$
\item $A^g_{\Gamma\cup\{\varphi\}}=A^g_\Gamma$\,  for any $g$ which is $\PP$-generic for a forcing notion $\PP$ which makes $\delta^{<\delta}$ countable.
\item $A^g_{\Gamma\cup\{\varphi\}}=A^g_\Gamma$\, for any $g$ which is $\Col(\w,\delta^{<\delta})$-generic over $V$.
\end{enumerate-(1)}
\end{lemma}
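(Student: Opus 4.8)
The plan is to prove the equivalence by a cycle of implications, with the genericity‐and‐absoluteness machinery doing all the work. Recall that by definition $\Gamma \vdash \varphi$ means $\Gamma \cup \{\neg\varphi\}$ is inconsistent, i.e.\ $A^h_{\Gamma \cup \{\neg\varphi\}} = \varnothing$ for \emph{every} $\PP$-generic $h$ over $V$, for every forcing $\PP$. Since $A^h_{\Gamma \cup \{\varphi\}}$ and $A^h_\Gamma$ differ exactly on $A^h_{\Gamma \cup \{\neg\varphi\}}$, statement (1) is equivalent to: $A^h_{\Gamma \cup \{\varphi\}} = A^h_\Gamma$ for \emph{all} generic $h$. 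So (1) trivially implies both (2) and (3), and (3) is just the special case of (2) where $\PP = \Col(\w,\delta^{<\delta})$. The only real content is therefore the implication from (3) back to (1): if the sets agree for one (equivalently, by homogeneity, all) $\Col(\w,\delta^{<\delta})$-generics, they agree for every generic over every forcing.

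First I would fix an arbitrary forcing $\QQ \in V$ and a $\QQ$-generic $k$ over $V$, aiming to show $A^k_{\Gamma\cup\{\neg\varphi\}} = \varnothing$. Suppose toward a contradiction that some $x \in \wp(\mu) \cap V[k]$ satisfies all of $\Gamma$ and also $\neg\varphi$. The key move is to pass to a further generic extension $V[k][\ell]$ in which $\delta^{<\delta}$ has been collapsed to be countable: work over $V[k]$ with the collapse $\Col(\w, (\delta^{<\delta})^{V[k]})$ — note $\delta^{<\delta}$ computed in $V[k]$ may be larger, but one can first absorb $\QQ$ and then collapse, or simply collapse a large enough cardinal, so that the combined extension $V[k][\ell]$ is a $\Col(\w, \lambda)$-extension of $V$ for a suitable $\lambda \geq \delta^{<\delta}$; here I would invoke the standard fact that a finite iteration of collapses absorbing a small forcing is again (forcing-equivalent to) a single collapse. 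In $V[k][\ell]$ the real $x$ still witnesses $x \models \psi$ for every $\psi \in \Gamma \cup \{\neg\varphi\}$, because "$x \models \psi$" is absolute between transitive $\ZFC$-models containing $x$ and $\psi$, as remarked in the excerpt after the recursive definition of satisfaction. Hence in $V[k][\ell]$ we have $A_{\Gamma\cup\{\neg\varphi\}} \neq \varnothing$, so $A_{\Gamma\cup\{\varphi\}} \neq A_\Gamma$ there.

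Now I would use homogeneity of the collapse: by (3), in \emph{every} $\Col(\w,\delta^{<\delta})$-generic extension of $V$ the identity $A_{\Gamma\cup\{\varphi\}} = A_\Gamma$ holds, and since $A_{\Gamma\cup\{\varphi\}} = A_\Gamma$ is expressible as a statement forced by the (homogeneous) collapse, it holds in $V[k][\ell]$ as well — one only needs that $V[k][\ell]$ is obtained by a collapse forcing which, via the weak homogeneity of $\Col$, decides $A_{\Gamma\cup\{\neg\varphi\}} = \varnothing$ the same way over $V$ regardless of which generic is chosen; and the condition "(3) for $\delta$" transfers up to "collapse of $\lambda \geq \delta^{<\delta}$" because any such larger collapse extension contains a $\Col(\w,\delta^{<\delta})$-generic over $V$ as an intermediate model, and the emptiness of $A_{\Gamma\cup\{\neg\varphi\}}$ is downward absolute to... no — rather, upward: if $x\in A_{\Gamma\cup\{\neg\varphi\}}$ in the big model, factor $\Col(\w,\lambda) = \Col(\w,\delta^{<\delta}) \times \mathrm{(rest)}$, and by a further absoluteness/homogeneity argument reflect the witness down. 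This contradicts (3). I expect the main obstacle to be exactly this bookkeeping: arranging the forcings so that one genuinely lands inside a $\Col(\w,\delta^{<\delta})$-extension of $V$ (using that $\Col(\w,\kappa)$ absorbs any forcing of size $\leq\kappa$ and that products of collapses collapse to the largest), and invoking weak homogeneity of the L\'evy collapse to see that the truth value of "$A_{\Gamma\cup\{\neg\varphi\}}=\varnothing$" over $V$ is independent of the generic — so that (3), stated for one/all such generics, pins it down everywhere.
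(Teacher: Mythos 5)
Your reduction of the lemma to the single implication (3)$\Rightarrow$(1) is correct: since $A^h_\Gamma$ is the disjoint union of $A^h_{\Gamma\cup\{\varphi\}}$ and $A^h_{\Gamma\cup\{\neg\varphi\}}$, statement (1) is just ``$A^h_{\Gamma\cup\{\neg\varphi\}}=\varnothing$ for every generic $h$ for every forcing,'' so (1)$\Rightarrow$(2)$\Rightarrow$(3) is trivial. (The paper itself gives no argument, deferring to the literature; the intended proof is the same absoluteness argument it spells out later in Lemma \ref{conv2}.) The gap is exactly at the step you yourself flag as ``the main obstacle.'' Having produced a witness $x\models\Gamma\cup\{\neg\varphi\}$ in a big collapse extension $V[k][\ell]$, you must get the \emph{existential statement} (not the witness $x$ itself, which in general does not lie in any $\Col(\w,\delta^{<\delta})$-extension of $V$) down into some $V[g]$ with $g$ being $\Col(\w,\delta^{<\delta})$-generic over $V$, in order to contradict (3). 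Neither weak homogeneity of the L\'evy collapse nor the factoring/absorption facts you invoke accomplish this: homogeneity only says that what is forced is forced by every condition, and (3) already quantifies over all generics, so it buys nothing here; and factoring $V[G]=V[g][h]$ only relocates the witness into $V[g][h]$, leaving the descent to $V[g]$ untouched.

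The missing tool is Shoenfield absoluteness. Choose $g$ to be $\Col(\w,\delta^{<\delta})$-generic over the \emph{outer} model $V[k]$; then $g$ is automatically generic over $V$ and $V[g]\subseteq V[k][g]$, which makes all of your absorption and product bookkeeping unnecessary. In $V[g]$ the set $\Gamma\cup\{\neg\varphi\}$ (of size at most $\delta^{<\delta}$, being a set of $\mathscr{L}_{\mu,\delta}$-formulae in $V$) is coded by a real, and ``there exists $x'\subseteq\mu$ with $x'\models\Gamma\cup\{\neg\varphi\}$'' is a $\Sigma^1_2$ statement in that code (satisfaction for real-coded infinitary formulae being absolute). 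This statement holds in $V[k][g]$, witnessed by your $x$ (using the absoluteness of ``$x\models\psi$'' between transitive $\ZFC$ models, as you note), so by Shoenfield it holds in $V[g]$, i.e.\ $A^g_{\Gamma\cup\{\neg\varphi\}}\neq\varnothing$, contradicting (3). Without this (or an equivalent model-existence argument for the now countable theory), your proof does not close; with it, homogeneity and the collapse-absorption apparatus can be deleted entirely.
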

\begin{proof}Easy. See \cite[Lemma 2.2]{doe} or \cite[Lemma 1.2]{farah}.\end{proof}

\begin{definition}Let $\delta\geq \w_1$ be a regular cardinal such that $\delta=\delta^{<\delta}$ and let $\mu\leq\delta$.  Let $\mathcal{E}$ be a collection of elementary embeddings $j:M\to N$ with critical point $\kappa=\kappa_j$ such that $M$, $N$ are transitive, and $M\models \ZFC^-$, $\card{N}<\delta$, together with a map $\nu_\mathcal{E}:\mathcal{E}\to \textsf{OR}$ satisfying $\kappa_j+1\leq\nu_{\mathcal{E}}(j)\leq j(\kappa_j)$ for each $j$.     We associate to $\mathcal{E}$, in  addition  to  the  axioms  and  rules  for the infinitary  logic $\mathscr{L}_{\mu,\,\delta}$, a set of axioms $A_\mathcal{E}$ as follows:
\begin{align*} A_\mathcal{E}:\:\:\:\:& \text{Whenever $j\in\mathcal{E}$, $j:M\to N$ and $\vec{\phi}=(\phi_i:  i<\kappa_j)\in M$  then}\\ &\textstyle{\bigdoublevee \res{j(\vec{\phi})}{\nu_\E(j)}}\to \textstyle{\bigdoublevee}\vec{\phi}.\end{align*}
\end{definition}
\begin{definition}\label{lalg}Given $\mathcal{E}$ as above and $\phi$, $\psi \in\mathscr{L}_{\mu,\,\delta}$, we define \[\phi\sim_{\mathcal{E}}\psi\iff A_\mathcal{E}\vdash \psi \leftrightarrow\phi,\]
and we let $[\phi]=\{\psi: \psi \sim_{\mathcal{E}}\phi\}$.  We write  $\mathbb{P}_\mathcal{E} =\{[\phi]: \phi\in\mathscr{L}_{\mu,\,\delta}, \phi\text{ consistent with }A_\E \}$.  In $\PP_\mathcal{E}$ we stipulate \[[\phi]\leq[\psi] \iff  A_\mathcal{E}\vdash \phi\to\psi.\]
\end{definition}
\begin{definition}\label{defn_weak_woodin} We say that $\delta$ is \emph{weakly Woodin} as being witnessed by $\mathcal{E}$ iff  for all $A\subset \delta$ there is some $\kappa<\delta$ such that for all $\alpha<\delta$ there is some $j:M\to N$, $j\in\mathcal{E}$ with $\kappa_j=\kappa$, $A\cap \kappa\in M$ and $j(A\cap\kappa)\cap\alpha=A\cap\alpha$.  \end{definition}

 By taking hulls, it is easy to see that every regular cardinal $\delta\geq\w_1$ is weakly Woodin, so ``weak Woodinness''  is not a large cardinal concept.
 \begin{lemma}\label{conv3}Let $\delta\geq\w_1$ be regular.  Let $\theta>\delta$,
$p\in H_\delta$, and let $\mathcal{E}$ be the collection of all $j:M\to N$ such that $M$ 
and $N$ are transitive, $M\models \ZFC^-$, $\card{N}<\delta$ and $p\in H_{\kappa_j}^M$.  Then $\mathcal{E}$ witness that $\delta$ is weakly Woodin.\end{lemma}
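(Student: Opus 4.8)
The plan is to run the familiar ``taking hulls'' argument, arranging the bookkeeping so that all the embeddings produced have one and the same critical point and move $p$ far enough down to land inside $\mathcal{E}$. Fix $A \subseteq \delta$; the task is to find a single $\kappa < \delta$ that serves every $\alpha < \delta$. The ordinal $\theta$ plays no role in what follows, so we may also assume $\theta$ is a regular cardinal, whence $H_\theta \models \ZFC^-$. Fix a wellorder $<^{*}$ of $H_\theta$ and set $\mu_0 = \card{\tc(\{p\})}$, which is $<\delta$ since $p \in H_\delta$. I would then build a continuous $\subseteq$-increasing chain $\seqs{X_\xi : \xi < \delta}$ of elementary submodels of $(H_\theta; \in, <^{*}, A, p, \delta)$ with $\card{X_\xi} < \delta$ and $\mu_0 \cup \xi \subseteq X_\xi$ for every $\xi$; this is possible because $\delta$ is regular. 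The set $C = \{\xi < \delta : X_\xi \cap \delta = \xi\}$ is then a club in $\delta$ --- closed by continuity of the chain, unbounded because each $\card{X_\xi}<\delta$ and $\delta$ is regular --- and I would fix any $\kappa \in C$ with $\kappa > \mu_0$, claiming that $\kappa$ works.

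The next step is to record, for $\xi \in C$, how the transitive collapse $\pi_\xi \colon X_\xi \to \bar X_\xi$ behaves; everything here is routine. One has $\tc(\{p\}) \subseteq X_\xi$, because a surjection of $\mu_0 \subseteq X_\xi$ onto $\tc(\{p\})$ --- which exists in $X_\xi$ by elementarity --- has its whole range inside $X_\xi$; consequently $\res{\pi_\xi}{\tc(\{p\})} = \id$ and $\pi_\xi(p) = p$. Also $\res{\pi_\xi}{\xi} = \id$ and $\pi_\xi(\delta) = \otp{X_\xi \cap \delta} = \xi$, so $\pi_\xi(A) = \pi_\xi(A \cap \delta) = A \cap \xi$. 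Moreover $\bar X_\xi$ is transitive and satisfies $\ZFC^-$, being isomorphic to $X_\xi \prec H_\theta$. Finally, when $\mu_0 < \xi$, the collapse of the surjection above witnesses inside $\bar X_\xi$ that $\card{\tc(\{p\})} \le \mu_0 < \xi$, so that $p \in H^{\bar X_\xi}_\xi$; in particular $p \in H^{\bar X_\kappa}_\kappa$ by the choice of $\kappa$.

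To finish, fix $\alpha < \delta$. I would pick $\kappa' \in C$ with $\kappa' > \max(\alpha, \kappa)$, put $M = \bar X_\kappa$ and $N = \bar X_{\kappa'}$, and let $j = \pi_{\kappa'} \circ \pi_\kappa^{-1} \colon M \to N$. Since $X_\kappa \subseteq X_{\kappa'}$ and both are elementary in $H_\theta$, Tarski--Vaught gives $X_\kappa \prec X_{\kappa'}$, so $j$ is elementary; also $M$ and $N$ are transitive, $M \models \ZFC^-$, and $\card{N} = \card{X_{\kappa'}} < \delta$. By the collapse facts, $\res{j}{\kappa} = \id$ while $j(\kappa) = j(\pi_\kappa(\delta)) = \pi_{\kappa'}(\delta) = \kappa' > \kappa$, so $\kappa_j = \kappa$; together with $p \in H^M_\kappa$ this places $j$ in $\mathcal{E}$. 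Finally $A \cap \kappa = \pi_\kappa(A) \in M$, and $j(A \cap \kappa) = j(\pi_\kappa(A)) = \pi_{\kappa'}(A) = A \cap \kappa'$, whence $j(A \cap \kappa) \cap \alpha = A \cap \alpha$ because $\alpha < \kappa'$. As $\alpha$ was arbitrary, $\kappa$ witnesses that $\delta$ is weakly Woodin as witnessed by $\mathcal{E}$.

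I do not anticipate a serious obstacle: this is in essence the standard remark that every regular $\delta \ge \w_1$ is weakly Woodin. The two points that genuinely need care are (i) producing a \emph{single} critical point $\kappa$ that works for all $\alpha$ simultaneously --- handled by freezing the ``base'' model $\bar X_\kappa$ and sliding the ``top'' model $\bar X_{\kappa'}$ above $\alpha$ --- and (ii) verifying the clause $p \in H^M_{\kappa_j}$, i.e.\ that the collapse fixes $p$ and that $M$ correctly sees $\tc(\{p\})$ as having cardinality below $\kappa$; it is exactly this second point that forces the choice $\kappa > \mu_0$.
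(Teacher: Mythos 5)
Your proof is correct and is essentially the same ``taking hulls'' argument as the paper's: fix one base model with $X\cap\delta=\kappa$ (so the critical point is frozen) and, for each $\alpha<\delta$, take a larger hull containing $\alpha+1$ and $\kappa$ as the target, the induced map between collapses being the desired $j$. Your chain-plus-club bookkeeping (and the explicit verification that the collapse fixes $p$ and that $p\in H^{M}_{\kappa_j}$, a point the paper leaves implicit) is only a cosmetic variation on the paper's direct use of $\hull{H_{\delta^+}}{X\cup\{\kappa\}\cup(\alpha+1)}$.
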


\begin{proof}Let $A\subset\delta$, and let $X\prec H_{\delta^+}$ with $X\cap\delta\in\delta$, $p\in X$, $\card{X}<\delta$ and $A\in X$.   Let $i:M\cong X$ where $M$ is transitive.  Write $\kappa=\kappa(i)=X\cap\delta<\delta$.  Of course, $A\cap \kappa=i^{-1}(A)$. 

Let $\alpha<\delta$ and let $Y=\hull{H_{\delta^+}}{X\cup\{\kappa\}\cup(\alpha+1)}\prec H_{\delta^+}$.  Suppose  $k:N\cong Y$, where $N$ is transitive.  Setting $j=k^{-1}\circ i$, $j:M\to N$ has critical point $\kappa$, and $j(A\cap \alpha)=k^{-1}(A)$ and $k^{-1}(A)\cap\alpha=A\cap\alpha$ as $\res{k}{\alpha+1}=\id$.\:  So $j(A\cap\kappa)\cap\alpha=A\cap\alpha$.\end{proof}

 \begin{lemma}\label{deltacc}Suppose $\delta\geq\w_1$ is a regular cardinal with $\delta^{<\delta}=\delta$.   Let $\mathcal{E}$ witness that $\delta$ is weakly Woodin, and let $\nu:\mathcal{E}\to \OR$ be a map with $\kappa_j+1\leq\nu_{\mathcal{E}}(j)\leq j(\kappa_j)$ for each $j\in\mathcal{E}$.\:   Then $\mathbb{P}_\mathcal{E}$\: as being defined in \ref{lalg} with the theory $A_\mathcal{E}$,\: has the $\delta$-c.c. 
\end{lemma}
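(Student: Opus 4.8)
The plan is to adapt the familiar $\delta$-c.c.\ argument for Woodin's extender algebra. I would argue by contradiction: suppose $\langle[\phi_\alpha]\mid\alpha<\delta\rangle$ is an antichain in $\mathbb{P}_\mathcal{E}$, so each $\phi_\alpha$ is consistent with $A_\mathcal{E}$ and $A_\mathcal{E}\vdash\neg(\phi_\alpha\wedge\phi_\beta)$ for $\alpha\neq\beta$. The goal is then to find a single $\kappa<\delta$ with
\[
A_\mathcal{E}\vdash\phi_\alpha\to\bigdoublevee_{\beta<\kappa}\phi_\beta\qquad\text{for all }\alpha<\delta;
\]
for once this is known, distributing the disjunction over $\wedge$ and invoking the incompatibilities gives $A_\mathcal{E}\vdash\neg\phi_\alpha$ for every $\alpha\geq\kappa$, contradicting the consistency of $\phi_\kappa$. (All that is used about $\mathbb{P}_\mathcal{E}$ is that, modulo $\sim_\mathcal{E}$, the logic $\mathscr{L}_{\mu,\delta}$ is a $<\delta$-complete Boolean algebra in which $[\bigdoublevee\Gamma]$ is the supremum of $\{[\psi]\mid\psi\in\Gamma\}$ whenever $|\Gamma|<\delta$ and in which $\wedge$ distributes over such suprema; these follow at once from the semantic description of $\vdash$ — obtained after collapsing $\delta^{<\delta}=\delta$ to be countable — together with the absoluteness of ``$x\models\varphi$''.)

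To produce such a $\kappa$, I would first use $\delta^{<\delta}=\delta$, which gives $|H_\delta|=\delta$ and $\mathscr{L}_{\mu,\delta}\subseteq H_\delta$, to fix $A\subseteq\delta$ coding $\langle\phi_\alpha\mid\alpha<\delta\rangle$ so that the club $C$ of those $\xi<\delta$ for which $A\cap\xi$ already decodes, absolutely, to $\langle\phi_\alpha\mid\alpha<\xi\rangle$ with each $\phi_\alpha\in H_\xi$ is locally recoverable from $A$. Since $\mathcal{E}$ witnesses that $\delta$ is weakly Woodin, Definition~\ref{defn_weak_woodin} applied to $A$ yields some $\kappa<\delta$; and in fact $\kappa\in C$: any transitive $M\models\ZFC^-$ with $A\cap\kappa\in M$ computes $C\cap\kappa$ correctly, and were $C\cap\kappa$ bounded in $\kappa$, then choosing $j\colon M\to N$ in $\mathcal{E}$ with $\kappa_j=\kappa$ and $j(A\cap\kappa)\cap\eta=A\cap\eta$ for arbitrarily large $\eta<\delta$ would, by elementarity and the local definition of $C$, force $C$ bounded in $\delta$ — absurd. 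So $C\cap\kappa$ is unbounded in $\kappa$, whence $\kappa\in C$ since $C$ is closed and $\kappa$ is a limit ordinal. In particular $\vec\phi:=\langle\phi_\beta\mid\beta<\kappa\rangle$ is the decoding of $A\cap\kappa$, and $\vec\phi$ (with all its entries) belongs to every such $M$.

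Next, fixing $\alpha$ with $\kappa\leq\alpha<\delta$ (the case $\alpha<\kappa$ being trivial), the characterisation of $\vdash$ and absoluteness of $\models$ reduce matters to showing: whenever $x\subseteq\mu$, $x\models A_\mathcal{E}$ and $x\models\phi_\alpha$, then $x\models\phi_\beta$ for some $\beta<\kappa$. Here I would build $j=k^{-1}\circ i\colon M\to N$ in $\mathcal{E}$ with $\kappa_j=\kappa$, following the Skolem-hull recipe in the proof of Lemma~\ref{conv3}: take $X\prec H_{\delta^+}$ with $X\cap\delta=\kappa$, $|X|<\delta$ and $A,\langle\phi_\gamma\mid\gamma<\delta\rangle,p\in X$, let $i\colon M\cong X$, and then take $Y\supseteq X$ with $Y\prec H_{\delta^+}$, $|Y|<\delta$, arranged so that $\alpha$ is the \emph{least} element of $Y\cap[\kappa,\delta)$ while $\operatorname{tc}(\{\phi_\alpha\})\subseteq Y$ (relabelling the tree of $\phi_\alpha$ in advance so that both requirements can hold), with $k\colon N\cong Y$. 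Using $\kappa\in C$ one sees $i^{-1}(\langle\phi_\gamma\mid\gamma<\delta\rangle)=\vec\phi$, so $\vec\phi\in M$ has length $\kappa_j$; and computing $j(\vec\phi)=k^{-1}(\langle\phi_\gamma\mid\gamma<\delta\rangle)$ shows that its entry at index $\kappa$ is $k^{-1}(\phi_\alpha)$, which equals $\phi_\alpha$ (up to a harmless relabelling) since $\operatorname{tc}(\{\phi_\alpha\})\subseteq Y$. As $\kappa=\kappa_j<\nu_\E(j)$, the corresponding axiom $\bigdoublevee\res{j(\vec\phi)}{\nu_\E(j)}\to\bigdoublevee\vec\phi$ in $A_\mathcal{E}$ has $\phi_\alpha$ as one of its left-hand disjuncts; hence $x\models A_\mathcal{E}$ and $x\models\phi_\alpha$ force $x\models\phi_\beta$ for some $\beta<\kappa$, as required.

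The hard part is the construction of $Y$ in the last step: one has to make $\alpha$ the immediate successor of $\kappa$ in $Y\cap\delta$ — so that $\phi_\alpha$ surfaces at index $\kappa$, which is $<\kappa_j+1\le\nu_\E(j)$ and hence \emph{below} $\nu_\E(j)$ (this is precisely why no lower bound on $\nu_\E(j)$ beyond $\kappa_j+1$ is needed) — while at the same time keeping $\operatorname{tc}(\{\phi_\alpha\})$ inside $Y$, so that the inverse collapse $k^{-1}$ really does fix $\phi_\alpha$. Reconciling these two demands is what forces the careful normalisation of the coding of formulas and the choice of the hulls $X$ and $Y$.
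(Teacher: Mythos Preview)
There is a genuine gap. In the key step you \emph{construct} an embedding $j=k^{-1}\circ i$ by taking Skolem hulls $X\subset Y\prec H_{\delta^+}$, following the recipe of Lemma~\ref{conv3}, and then assert that this $j$ lies in $\mathcal{E}$. But in the present lemma $\mathcal{E}$ is an \emph{arbitrary} collection witnessing weak Woodinness; it need not contain any hull-embeddings at all. Since the axioms $A_\mathcal{E}$ only speak about embeddings that are actually in $\mathcal{E}$, a hand-built $j\notin\mathcal{E}$ yields no axiom and the implication $\phi_\alpha\to\bigdoublevee\vec\phi$ cannot be derived from it. (Your earlier argument that $\kappa\in C$ does use the $j\in\mathcal{E}$ coming from Definition~\ref{defn_weak_woodin}; the problem is that you then abandon this and roll your own $j$ for the main step.) The attendant difficulties you flag at the end --- arranging that $\alpha$ be the immediate successor of $\kappa$ in $Y\cap\delta$ while simultaneously $\operatorname{tc}(\{\phi_\alpha\})\subseteq Y$, and the ``harmless relabelling'' --- are symptoms of this wrong turn, not the real obstacle.

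The paper's argument avoids all of this by using Definition~\ref{defn_weak_woodin} directly and aiming for much less. Having coded $\vec\phi$ by $A\subset\delta$ and obtained $\kappa$ from weak Woodinness, one simply chooses $\alpha<\delta$ large enough that $A\cap\alpha$ already codes $\res{\vec\phi}{\kappa+1}$, and takes the $j\in\mathcal{E}$ with $\kappa_j=\kappa$, $A\cap\kappa\in M$, and $j(A\cap\kappa)\cap\alpha=A\cap\alpha$ guaranteed by the definition. Absoluteness of the decoding then gives $j(\res{\vec\phi}{\kappa})(\kappa)=\phi_\kappa$, and since $\nu_\mathcal{E}(j)\ge\kappa+1$ the axiom for this very $j\in\mathcal{E}$ yields $A_\mathcal{E}\vdash\phi_\kappa\to\bigdoublevee\res{\vec\phi}{\kappa}$. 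Thus $[\phi_\kappa]$ is already compatible with some $[\phi_i]$, $i<\kappa$, contradicting the antichain hypothesis. Note in particular that one only needs a \emph{single} index ($\alpha=\kappa$) to fail, not the uniform statement $A_\mathcal{E}\vdash\phi_\alpha\to\bigdoublevee_{\beta<\kappa}\phi_\beta$ for all $\alpha<\delta$ that you set out to prove.
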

\begin{proof} Let $\vec{\phi}=\langle\phi_i:i<\delta\rangle$ be such that $\langle[\phi_i]:i<\delta\rangle$ is an antichain in $\mathbb{P}_{\mathcal{E}}$.     As $\delta^{<\delta}=\delta$ and every $\phi_i$ is a formula in $\mathcal{L}_{\mu,\delta}$, we may code $\vec{\phi}$ by a subset $A$ of $\delta$ (so $\vec{\phi}$ may be identified with $A$).\:  As $\delta$ is weakly Woodin,
we may pick $\kappa$ as in \ref{defn_weak_woodin}. We have $\kappa+1<\delta$, and we may pick $j:M\to N$ in $\mathcal{E}$ such that  if $\kappa=\kappa_j$, then
$$j(\res{\vec{\phi}}{\kappa})(\kappa)=\phi_{\kappa}$$
Since $\nu(j)\geq \kappa+1$, the axioms of $A_\mathcal{E}$ tell us  that:
$$A_\mathcal{E}\vdash\phi_\kappa\to\bigdoublevee \res{j(\res{\vec{\phi}}{\kappa})}{\nu(j)}\to\bigdoublevee \res{\vec{\phi}}{\kappa}$$
Theferore, $\{[\phi_i]:i\leq\kappa\}$ is not an antichain, which leads to a contradiction.
\end{proof}

For $x\subset \mu$ such that $x\models A_\E$, let $G_x=\{[\phi]\in\PP_\E: x\models\phi\}$. \:  It is easy to see that $G_x\subset \PP_\E$ is an ultrafilter.  Now,     given a $\PP_\E$-generic filter $G$ over $V$, notice that for $x:=\{\xi<\mu: \text{``$\dot{\xi}\in\dot{x}$''}\in G\}$ we have $G_x=G$.   In such case, we say that $x$ is $\PP_\E$-\emph{generic} over $V$. 

\begin{lemma}\label{conv2}Let $\mathcal{E}$ witness that $\delta$ is weakly Woodin,
where $\delta^{<\delta}=\delta$.  Let $x\subset\mu$, $x$ not necessarily in $V$ but in a transitive  outer model $V[x]$
of $\ZFC$, and assume that $x\models A_\mathcal{E}$.  Then $x$ is $\mathbb{P}_\mathcal{E}$-generic over $V$.
\end{lemma}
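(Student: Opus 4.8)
The plan is to show that $G_x = \{[\phi] \in \PP_\E : x \models \phi\}$ is generic over $V$ by verifying it meets every dense set. Since we already know (from the discussion preceding the lemma) that $G_x$ is an ultrafilter on $\PP_\E$ whenever $x \models A_\E$, it remains only to check genericity. So fix a maximal antichain $\{[\phi_i] : i < \theta\} \subseteq \PP_\E$ in $V$; by Lemma~\ref{deltacc} and $\delta^{<\delta} = \delta$ we may assume $\theta = \delta$ and, padding if necessary, that $\langle \phi_i : i < \delta\rangle$ is coded by a single subset $A$ of $\delta$. It suffices to show $x \models \phi_i$ for some $i < \delta$, i.e.\ that $G_x$ meets this antichain.

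The key step is to invoke weak Woodiness for the set $A$ coding the antichain. This yields a $\kappa < \delta$ such that for every $\alpha < \delta$ there is $j : M \to N$ in $\E$ with $\kappa_j = \kappa$, $A \cap \kappa \in M$, and $j(A \cap \kappa) \cap \alpha = A \cap \alpha$. Running the same computation as in the proof of Lemma~\ref{deltacc}: for suitably large $\alpha$ we get that $j(\res{\vec\phi}{\kappa})$ extends $\res{\vec\phi}{\kappa}$ far enough that $\res{\vec\phi}{\kappa}$ is a (proper) initial segment of $\res{j(\res{\vec\phi}{\kappa})}{\nu_\E(j)}$; more to the point, since $\{[\phi_i] : i < \delta\}$ is a \emph{maximal} antichain, we want to arrange that $\bigdoublevee \res{\vec\phi}{\kappa}$ is provably equivalent over $A_\E$ to $\bigdoublevee\vec\phi$, which in turn is a theorem of $A_\E$ (a maximal antichain's disjunction is forced true). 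Concretely, pick $\alpha < \delta$ large enough that $\res{j(\res{\vec\phi}{\kappa})}{\nu_\E(j)}$ already ``sees'' the relevant formulas; then the axiom
\[
\textstyle{\bigdoublevee \res{j(\res{\vec\phi}{\kappa})}{\nu_\E(j)}} \to \textstyle{\bigdoublevee}\res{\vec\phi}{\kappa}
\]
of $A_\E$, combined with $A_\E \vdash \bigdoublevee\vec\phi$ (since the antichain is maximal, so in the collapse extension every $x$ satisfies some $\phi_i$) and $A_\E \vdash \bigdoublevee\vec\phi \to \bigdoublevee\res{j(\res{\vec\phi}{\kappa})}{\nu_\E(j)}$, gives $A_\E \vdash \bigdoublevee\res{\vec\phi}{\kappa}$. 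Since $x \models A_\E$ and ``$x \models \psi$'' is absolute, we conclude $x \models \phi_i$ for some $i < \kappa < \delta$, which is exactly what was needed.

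The main obstacle, and the place requiring genuine care, is the bookkeeping in the previous paragraph: one must choose the ordinal $\alpha$ (hence the embedding $j$) so that the tail $\res{j(\res{\vec\phi}{\kappa})}{\nu_\E(j)}$ of $j(\res{\vec\phi}{\kappa})$ between $\kappa$ and $\nu_\E(j)$ genuinely contains formulas witnessing the disjunction $\bigdoublevee\vec\phi$. This is delicate because $\nu_\E(j)$ may be as small as $\kappa_j + 1$, so the ``new'' formulas contributed by $j$ at stage $\kappa$ are just $j(\res{\vec\phi}{\kappa})(\kappa)$ together with possibly nothing more; one therefore has to argue that $j(\res{\vec\phi}{\kappa})(\kappa)$ itself, or a short initial run of the image sequence, already does the job — using that $A$ agrees with $j(A\cap\kappa)$ up to $\alpha$ and that $\vec\phi$ restricted below $\alpha$ is already a maximal antichain in the relevant sense inside $N$. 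Once this is set up correctly (essentially a reflection argument carried out inside $N$, using elementarity of $j$ and the fact that $\langle[\phi_i]:i<\kappa\rangle$ being a maximal antichain is something $M$ can express about its own version of $\PP_\E$), the rest is the routine absoluteness of the satisfaction relation. An alternative, perhaps cleaner, packaging: rather than invoking maximality of an abstract antichain, directly take $\vec\phi$ to enumerate, in the collapse extension $V[h]$ where $h$ is $\Col(\w,\delta^{<\delta})$-generic, a maximal antichain below a condition $[\psi]$ not in $G_x$, derive a contradiction by showing $x$ must satisfy some $\phi_i$ hence lies below some element of $G_x$ incompatible with $[\psi]$ — but this needs $x$ to exist in a common outer model with $h$, which one arranges by first forcing with $\Col(\w,\delta^{<\delta})$ over $V[x]$.
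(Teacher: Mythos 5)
There is a genuine gap, and it sits exactly where the real content of the lemma lies. Since $x$ is only assumed to live in some transitive outer model $V[x]$ --- not in a generic extension of $V$ --- while $A_\E\vdash\psi$ is by definition a statement about collapse-generic extensions of $V$ (no $x'$ in such an extension satisfies $A_\E\cup\{\neg\psi\}$), you cannot pass from ``$x\models A_\E$ and $A_\E\vdash\bigdoublevee_{i<\theta}\phi_i$'' to ``$x\models\phi_i$ for some $i$'' in one line. Your closing sentence ``Since $x\models A_\E$ and `$x\models\psi$' is absolute, we conclude $x\models\phi_i$'' conflates absoluteness of the satisfaction relation (for a fixed $x$, between transitive models containing $x$) with what is actually needed: a transfer of the putative counterexample from the world of $x$ into a generic extension of $V$. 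The paper's proof does precisely this: if $x\models A_\E\cup\{\neg\bigdoublevee_{i<\theta}\phi_i\}$, then such an $x'$ exists in $V[x]^{\Col(\w,\delta)}$, and by Shoenfield absoluteness (the existence of such an $x'$ is, after coding, a $\Sigma^1_2$ statement with parameters in $V^{\Col(\w,\delta)}$) such an $x'$ already exists in $V^{\Col(\w,\delta)}$, contradicting $A_\E\vdash\bigdoublevee_{i<\theta}\phi_i$. This absoluteness step is the heart of the lemma and is entirely missing from your argument; your ``alternative packaging'' at the end gestures at collapsing over $V[x]$ but again supplies no absoluteness argument connecting the two collapse extensions.

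Separately, the reflection machinery you build is both unnecessary and faulty in detail. By Lemma \ref{deltacc} every maximal antichain already has size $\theta<\delta$, so $\bigdoublevee_{i<\theta}\phi_i$ is a legitimate formula of $\mathscr{L}_{\mu,\delta}$ and maximality immediately yields $A_\E\vdash\bigdoublevee_{i<\theta}\phi_i$; no second appeal to weak Woodinness is needed in this lemma. Your move ``we may assume $\theta=\delta$'' contradicts the $\delta$-c.c.\ you just invoked, and after padding, $\bigdoublevee\vec\phi$ is no longer a formula of the language (disjunctions must have length $<\delta$). Moreover the intermediate claim $A_\E\vdash\bigdoublevee\vec\phi\to\bigdoublevee\res{j(\res{\vec\phi}{\kappa})}{\nu_\E(j)}$ is asserted without proof and there is no reason it should hold: the axioms of $A_\E$ only give the converse-direction implications $\bigdoublevee\res{j(\vec\psi)}{\nu_\E(j)}\to\bigdoublevee\vec\psi$, and in Lemma \ref{deltacc} the sequence is fed into the axiom via the identity $j(\res{\vec\phi}{\kappa})(\kappa)=\phi_\kappa$, not via any implication from the full disjunction. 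The ``main obstacle'' you describe (choosing $\alpha$ so that the tail of the image sequence ``sees'' witnessing formulas) is an artifact of this detour rather than the actual difficulty of the lemma, which is the Shoenfield transfer described above.
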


\begin{proof} We show that $G_x =\{[\varphi]\in \PP_\E:x\models \varphi\}$ is $\PP_\E$-generic over $V$. 
Let $A=\{[\phi_i]:i<\theta\}$ be a maximal antichain of $\mathbb{P}_\mathcal{E}$ in $V$.  By   \ref{deltacc} we have that $\PP_\E$ is $\delta$-c.c.  so  $\theta<\delta$ and hence $\bigdoublevee_{i<\theta} \phi_i$ is in $\mathscr{L}_{\mu,\delta}$ and in
fact $[\bigdoublevee_{i<\theta} \phi_i] \in \PP_\E$.   Since $A$ is maximal, we have that $A_\E\vdash\bigdoublevee_{i<\theta} \phi_i$. 

We claim that $x\models A_\E$ yields that \[x\models \bigdoublevee_{i<\theta} \phi_i.\]
Suppose that $x \models A_\E \cup \{ 
\neg \bigdoublevee_{i<\theta} \phi_i \}$. Then in $V[x]$ and hence in
$V[x]^{\Col(\w,\delta)}$ there is some $x'$ with $x' \models A_\E \cup \{ 
\neg \bigdoublevee_{i<\theta} \phi_i \}$, and by Shoenfield absoluteness there will be
an $x'$ in $V^{\Col(\w,\delta)}$ with $x' \models A_\E \cup \{ 
\neg \bigdoublevee_{i<\theta} \phi_i \}$. But this contradicts $A_\E\vdash\bigdoublevee_{i<\theta} \phi_i$. 

Thus $x\models \phi_i$ for some $i<\theta$, so $G_x\cap A\neq \varnothing$ and hence $G_x$ is $\PP_\E$-generic.  
   \end{proof}

\begin{lemma}\label{key_lemma} Let $\delta$, $p,\, \mathcal{E}$ be as in the statement of lemma \ref{conv3}.  Let $x\subset\mu$, $x$ not necessarily in $V$, be such that for every $j:M\to N$ in $\mathcal{E}$,  there is some elementary $\hat{j}:M[x]\to N[x]$ with $\hat{j}\supset j$ and $M[x]\models \ZFC^-$.  Then $x$ is $\mathbb{P}_\mathcal{E}$-generic over $V$.\end{lemma}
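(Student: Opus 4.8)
The plan is to reduce this to Lemma~\ref{conv2}: since $\mathcal{E}$ witnesses that $\delta$ is weakly Woodin by Lemma~\ref{conv3} (and $\delta^{<\delta}=\delta$), it suffices to verify that $x\models A_\mathcal{E}$, because Lemma~\ref{conv2} then yields at once that $x$ is $\mathbb{P}_\mathcal{E}$-generic over $V$. So I fix $j:M\to N$ in $\mathcal{E}$ together with a sequence $\vec{\phi}=(\phi_i:i<\kappa_j)\in M$ of $\mathscr{L}_{\mu,\delta}$-formulas, and I must show that $x\models\bigdoublevee\res{j(\vec{\phi})}{\nu_\E(j)}\to\bigdoublevee\vec{\phi}$. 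Assuming $x\models\bigdoublevee\res{j(\vec{\phi})}{\nu_\E(j)}$, I fix $\xi<\nu_\E(j)\le j(\kappa_j)$ with $x\models j(\vec{\phi})(\xi)$; the goal is to produce $i<\kappa_j$ with $x\models\phi_i$.

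The main tool is the lift $\hat{j}:M[x]\to N[x]$ furnished by the hypothesis. Since all ordinals below $\kappa_j$ lie in the transitive set $M$ and are hence fixed by $\hat{j}$, we have $\operatorname{crit}(\hat{j})=\operatorname{crit}(j)=\kappa_j$, and $\hat{j}\upharpoonright M=j$ gives $\hat{j}(\vec{\phi})=j(\vec{\phi})$ and $\hat{j}(\kappa_j)=j(\kappa_j)$. Crucially, $\hat{j}(x)=x$: this holds because $x\subseteq\mu$ with $\mu<\kappa_j=\operatorname{crit}(\hat{j})$, which is automatic when $x$ is a real (the critical point of an elementary embedding between transitive models is uncountable) and, for general $\mu$, may be arranged by thinning $\mathcal{E}$ to the $j$ with $\kappa_j>\mu$ without affecting Lemma~\ref{conv3}. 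Write $\Phi$ for the formal disjunction $\bigdoublevee_{i<\kappa_j}\phi_i$, an element of $M[x]$; then $\hat{j}(\Phi)=\bigdoublevee_{\eta<j(\kappa_j)}j(\vec{\phi})(\eta)\in N[x]$, and $j(\vec{\phi})(\xi)$ is one of its disjuncts as $\xi<j(\kappa_j)$.

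Now I run an absoluteness argument, using that the satisfaction relation ``$y\models\psi$'' is absolute between transitive models of $\ZFC^-$ that contain $y$ and $\psi$. Since $x\models j(\vec{\phi})(\xi)$ holds and $N[x]$ is a transitive model of $\ZFC^-$ (by elementarity of $\hat{j}$, as $M[x]\models\ZFC^-$) containing $x$ and $j(\vec{\phi})(\xi)$ (note $\xi<j(\kappa_j)\in N$ and $N$ is transitive, so $\xi,j(\vec{\phi})(\xi)\in N\subseteq N[x]$), we get $N[x]\models$ ``$x\models j(\vec{\phi})(\xi)$'', hence $N[x]\models$ ``$x\models\hat{j}(\Phi)$''. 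Since $\hat{j}(x)=x$, this says $N[x]\models$ ``$\hat{j}(x)\models\hat{j}(\Phi)$'', so by elementarity of $\hat{j}$ we conclude $M[x]\models$ ``$x\models\Phi$''. By absoluteness of $\models$ once more, $x\models\Phi$, that is, $x\models\phi_i$ for some $i<\kappa_j$, which is precisely $x\models\bigdoublevee\vec{\phi}$. As $j$ and $\vec{\phi}$ were arbitrary, $x\models A_\mathcal{E}$, and Lemma~\ref{conv2} yields that $x$ is $\mathbb{P}_\mathcal{E}$-generic over $V$.

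The step I expect to be the main obstacle is establishing $\hat{j}(x)=x$ (equivalently, that the constants occurring in $\hat{j}(\Phi)$ all lie below $\operatorname{crit}(\hat{j})$): the recursion defining $\models$ and its absoluteness are routine, but the whole argument collapses if the lift moves $x$, which is exactly why the critical point of the lift must sit above $\mu$ (and why, in the intended reading with $x$ a real, no extra hypothesis on $\mathcal{E}$ is needed). What remains is bookkeeping: making sure that the relevant formulas, the ordinal $\xi$, and $x$ itself genuinely belong to the transitive $\ZFC^-$-models $M[x]$ and $N[x]$ in which satisfaction is evaluated.
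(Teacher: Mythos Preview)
Your argument is correct and follows exactly the route taken in the paper: reduce to Lemma~\ref{conv2} via Lemma~\ref{conv3}, then verify $x\models A_\mathcal{E}$ by passing the truth of a disjunct up to $N[x]$ by absoluteness, across to $M[x]$ by elementarity of $\hat{j}$, and back down by absoluteness. The paper's proof is terser and writes the elementarity step as pulling ``$x\models\bigdoublevee\res{j(\vec{\phi})}{\nu_\E(j)}$'' directly back to ``$x\models\bigdoublevee\res{\vec{\phi}}{\kappa}$''; your detour through the full disjunction $\hat{j}(\Phi)=\bigdoublevee_{\eta<j(\kappa_j)}j(\vec{\phi})(\eta)$ is the honest way to make that step precise, and your explicit isolation of the requirement $\hat{j}(x)=x$ is a point the paper leaves implicit.
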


\begin{proof}By the previous lemmas, it suffices to show that $x\models A_\mathcal{E}$.  So let $j:M\to N$ in $\mathcal{E}$ and let $\vec{\phi}=(\phi_i:i<\kappa(j))\in M$.  Let us assume that $x\models \bigdoublevee \res{j(\vec{\phi})}{\nu_\E(j)}$.  Then $$N[x]\models ``x\models\bigdoublevee \res{{j}(\vec{\phi})}{\nu_\E({j})}"$$
hence by elementarity of $\hat{j}$, $M[x]\models ``x\models\bigdoublevee \res{\vec{\phi}}{\kappa}"$ and so $x\models\bigdoublevee\vec{\phi}$. \end{proof}

The arguments given so far allow us to reprove a theorem of Bukowsk\'{y}'s,
see \cite{bukovsky}, which we state as follows:

\begin{theorem}Let $X\subset \mu$, $X$ not necessarily in $V$ but in a transitive outer model $V[X]$ of $\ZFC$, and let $\delta$ be a regular uncountable cardinal.  The following are equivalent:

\begin{enumerate-(1)}\item There is some $\mathbb{P}$ such that $\mathbb{P}$ has the $\delta$-c.c. and $X$ is $\mathbb{P}$-generic over $V$. 
\item There is some $\theta>\mu$ and some club $C$ of $Y\prec \HHH{\theta}$ with $Y\cap\delta\in\delta$, $\card{Y}<\delta$ such that if $j:M\cong Y$, where $M$ is transitive, then there is some elementary $\hat{j}:M[\bar{X}]\to\HHH{\theta}[X]$ for some $\bar{X}$ and $\HHH{\theta}[X]\models \ZFC^-$.
\item If $f:\theta\to\On$, some $\theta$, $f\in V[x]$, then there is $g:\theta\to\wp(\On),\, g\in V$ such that $\card{g(\xi)}<\delta$ in $V$ and $f(\xi)\in g(\xi)$ for all $\xi<\theta$.\end{enumerate-(1)}\end{theorem}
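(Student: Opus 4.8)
The plan is to prove the cycle $(1)\Rightarrow(3)\Rightarrow(2)\Rightarrow(1)$; the weight of the argument sits in $(3)\Rightarrow(2)$, where one manufactures the club, and in $(2)\Rightarrow(1)$, where the club is fed into the extender algebra $\PP_\E$ of Section~\ref{woodinmodified}. The implication $(1)\Rightarrow(3)$ is the classical antichain argument of Bukovsk\'y \cite{bukovsky}: if $X$ is $\PP$-generic for a $\delta$-c.c.\ $\PP$ and $f\colon\theta\to\On$ lies in $V[X]$ with a name $\dot f\in V$, set $g(\xi)=\{\eta\in\On:\exists p\in\PP\ p\Vdash\dot f(\check\xi)=\check\eta\}$; distinct values in $g(\xi)$ are witnessed by pairwise incompatible conditions, so the $\delta$-c.c.\ gives $\card{g(\xi)}<\delta$, while $g\in V$ and $f(\xi)\in g(\xi)$ are immediate.

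For $(3)\Rightarrow(2)$ the idea is to turn the covering functions of $(3)$ into Skolem functions. Note first that under $(3)$ every $V$-regular $\theta\ge\delta$ stays regular in $V[X]$ (cover a would-be short cofinal map by a $V$-function with bounded fibres and take the sup function), so fix such a $\theta>\mu$ with $H_\theta^{V[X]}\models\ZFC^-$, fix a wellorder $<^*$ of $H_\theta^{V[X]}$, and put $\mathfrak A=(H_\theta^{V[X]};\in,<^*,X,H_\theta^V)$. For each Skolem term $\tau$ of $\mathfrak A$ let $f_\tau\in V[X]$ send a code of a finite tuple $\vec z$ from $H_\theta^V$ to a code of $\tau^{\mathfrak A}(X,\vec z)$ whenever the latter lies in $H_\theta^V$; apply $(3)$ to each $f_\tau$ and bundle the resulting $V$-functions into one sequence $\vec g\in V$. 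Let $C$ be the club, which now lives in $V$, of all $Y\prec H_\theta^V$ with $\card Y<\delta$, $Y\cap\delta\in\delta$, $\delta\in Y$, $\mu+1\subseteq Y$, and $Y$ closed under $\vec g$ and the fixed coding bijections. The crucial point is that for $Y\in C$ the hull $Y^\ast:=\hull{\mathfrak A}{Y\cup\{X\}}$ satisfies $Y^\ast\cap H_\theta^V=Y$: if $a=\tau^{\mathfrak A}(X,\vec y)\in H_\theta^V$ with $\vec y\in Y$, then the code $\xi$ of $\vec y$ is in $Y$, so $a$ is (coded by an element of) $g_\tau(\xi)\in Y$, and since $\card{g_\tau(\xi)}<\delta$ is computed in $Y$ and hence is $<Y\cap\delta$, an enumeration of $g_\tau(\xi)$ of that length also lies in $Y$, whence $g_\tau(\xi)\subseteq Y$ and $a\in Y$. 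Consequently the transitive collapse $\sigma$ of $Y^\ast$ agrees on $Y$ with the collapse $\pi_Y$ of $Y$; writing $M_Y$ for the collapse of $Y$, $j_Y:=\pi_Y^{-1}\colon M_Y\to H_\theta^V$, $M_Y[\bar X]$ for the $\ZFC^-$-part of the collapse of $Y^\ast$ and $\bar X=\sigma(X)$, the map $\sigma^{-1}$ is an elementary $\hat j_Y\colon M_Y[\bar X]\to H_\theta^{V[X]}$ extending $j_Y$ with $H_\theta^{V[X]}\models\ZFC^-$; and $\mu+1\subseteq Y$, $X\subseteq\mu$ force $\bar X=X$. This is exactly $(2)$ for this $\theta$ and $C$.

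For $(2)\Rightarrow(1)$ one first observes that $(2)$ itself forces the covering behaviour onto a club: for $Y\in C$ (we add $\delta\in Y$, $\mu+1\subseteq Y$, $Y\cap\delta>\mu$) the lift $\hat j_Y\colon M_Y[X]\to H_\theta^{V[X]}$ of $(2)$ extends $j_Y$ and maps $X$ to $X$ (as $X\subseteq\mu<\cp{j_Y}$), and since it respects the ground-model predicate, $a=\tau^{\mathfrak A}(X,\vec y)\in H_\theta^V$ with $\vec y\in Y$ satisfies $\hat j_Y(\bar a)=a$ for $\bar a=\tau^{M_Y[X]}(X,\pi_Y(\vec y))\in M_Y$, so $a=j_Y(\bar a)\in Y$; thus $\hull{\mathfrak A}{Y\cup\{X\}}\cap H_\theta^V=Y$ and this hull collapses to $M_Y[X]$ over $M_Y$. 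Now for $Y\subseteq Y'$ in $C$ with $Y\cap\delta<Y'\cap\delta$ put $\sigma_{Y,Y'}:=(\res{\pi_{Y'}}{Y})\circ\pi_Y^{-1}\colon M_Y\to M_{Y'}$, an elementary embedding with $\cp{\sigma_{Y,Y'}}=Y\cap\delta$ between transitive models, $M_Y\models\ZFC^-$, and let $\E$ collect these with $\nu_\E(\sigma_{Y,Y'})=(Y\cap\delta)+1$. Repeating the hull argument of Lemma~\ref{conv3} with $C$ in place of all elementary submodels shows that $\E$ witnesses that $\delta$ is weakly Woodin, so $\PP_\E$ has the $\delta$-c.c.\ by Lemma~\ref{deltacc}. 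Finally, choosing each lift $\hat j_Y$ with range $\hull{\mathfrak A}{Y\cup\{X\}}$ (these ranges are nested along $\subseteq$ and collapse to $M_Y[X]$ over $M_Y$ by the previous sentence), the composite $\hat j_{Y'}^{-1}\circ\hat j_Y\colon M_Y[X]\to M_{Y'}[X]$ is an elementary extension of $\sigma_{Y,Y'}$ with $M_Y[X]\models\ZFC^-$; the proof of Lemma~\ref{key_lemma} — which uses nothing about $\E$ beyond weak Woodinness plus this lifting — then yields $X\models A_\E$, and Lemmas~\ref{conv2} and \ref{deltacc} give that $X$ is $\PP_\E$-generic over $V$ for the $\delta$-c.c.\ forcing $\PP_\E$, i.e.\ $(1)$.

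The step I expect to be the main obstacle is the coherence bookkeeping in $(2)\Rightarrow(1)$: one must choose $\E$ so that it \emph{simultaneously} witnesses weak Woodinness (which, as in Lemma~\ref{conv3}, wants a rich supply of embeddings) \emph{and} consists only of maps that $X$ provably lifts (which forces the embeddings to descend from the given club $C$), and then arrange the individual lifts supplied by $(2)$ so that they restrict to the hulls and compose correctly — which hinges on the observation that $(2)$ already delivers $\hull{\mathfrak A}{Y\cup\{X\}}\cap\On=Y\cap\On$ and on pinning down the notion of ``$M[X]$'' so that $\On\cap M[X]=\On\cap M$. The case $\mu=\delta$, and (in $(3)\Rightarrow(2)$) functions $f$ with domain $\ge\delta$, need the routine reduction to the case $<\delta$ but no new ideas.
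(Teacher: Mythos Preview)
Your proposal is correct and follows the same cycle $(1)\Rightarrow(3)\Rightarrow(2)\Rightarrow(1)$ as the paper, with the same ideas at each step: the standard antichain argument for $(1)\Rightarrow(3)$, Skolem-function covering for $(3)\Rightarrow(2)$, and the extender algebra $\PP_\E$ for $(2)\Rightarrow(1)$.

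The one place where you are more explicit than the paper is $(2)\Rightarrow(1)$: the paper simply says ``this is by the proof of Lemma~\ref{key_lemma},'' whereas you spell out a specific $\E$ built from pairs $Y\subseteq Y'$ in the club via $\sigma_{Y,Y'}=\pi_{Y'}\circ j_Y\colon M_Y\to M_{Y'}$. This has the virtue of making the targets $M_{Y'}$ small (so $\card{N}<\delta$ as the framework of Section~\ref{woodinmodified} demands) and of making the lifts $\hat\sigma_{Y,Y'}$ arise literally as composites of the $\hat j_Y$ supplied by $(2)$, at the cost of the coherence bookkeeping you correctly flag. The paper's (implicit) alternative is to take $\E$ to consist of the single-step maps $j_Y\colon M_Y\to H_\theta$ themselves; this avoids the composition issue but forces one to rerun the $\delta$-c.c.\ argument with a slightly different axiom scheme (since $\card{H_\theta}\geq\delta$). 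Both routes work and neither is materially shorter.
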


\begin{proof}``(2)$\Rightarrow$(1)'':\: This is by the proof of Lemma \ref{key_lemma}. 

\noindent``(3)$\Rightarrow$(2)'':\:  Let $\theta>\mu$ be sufficiently big.\:  Notice that for all $X$,  $\HHH{\theta}\subset\HHH{\theta}^{V[X]}=\HHH{\theta}[X]$.\:  Doing a suitable book-keeping, let $\tilde{h}:\w\times(\HHH{\theta}^{V[G]})^{<\w}\to \HHH{\theta}^{V[G]}\in V[G]$ be a Skolem  function.  Thus, for all $A\subset\HHH{\theta}$,  $\tilde{h}"A\prec\HHH{\theta}^{V[G]}$.   

\noindent Let us define $h:\theta\to \On$ by using $\tilde{h}$ as follows:

\[h(x)=\begin{cases}\tilde{h}(x)&\text{ if  }\tilde{h}(x)\in \HHH{\theta}\\
\varnothing&\text{ otherwise}\end{cases}\]

\noindent By using (3), we may find some $g\in V$ such that for all $\xi<\theta$, $\card{g(\xi)}<\delta$ and $h(\xi)\in g(\xi)$.\:  In particular, for every $A\subset\HHH{\theta}$ of size $<\delta$ in $V$, $g"A\cap\delta\in\delta$ and $h"A\subset g" A$.

\noindent Notice that if $\tilde{h}(x)\in V$,  $x\in A$, then $\tilde{h}(x)=h(x)\in g(x)\subset A$, so  $\tilde{h}"A\cap V\subset A$.  On the other side, as $\tilde{h}$ is a Skolem function we have that $A\subset \tilde{h}"A\cap V$ for $A\in V$.  Thus if $A\in V$ then $\tilde{h}"A\cap V= A$.   This shows that for every $A\in V$,  $A\subset \HHH{\theta}$, $A=g"A$ holds.   
Therefore, as $\tilde{h}"A\prec \HHH{\theta}^{V[G]}$ and $\tilde{h}"A\cap V=A$, we have that $A\prec \HHH{\theta}^V$.

\noindent According to this, let us define $C$ as the  collection of all $g$-closed $Y\prec \HHH{\theta}$ with $Y\cap\delta\in\delta$ and $\card{Y}<\delta$.\:  By construction, we have that the collection $C$ satisfy (2).

\noindent ``(1)$\Rightarrow$(3)'':\: This is a standard argument.  \end{proof}

\begin{question}Suppose that $\mathcal{E}\subset V_\delta$ is a collection of extenders.  Define $A_\mathcal{E}$ as in section \ref{woodinmodified}, that is for each extender $E\in \mathcal{E}$ and for every sequence of formulae $\vec{\varphi}=(\varphi_i: i<\cp{E})$ we associate the axiom
$$\bigdoublevee \res{i_E(\vec{\varphi})}{\nu(E)}\to \bigdoublevee \vec{\varphi}$$
where $i_E: M\to \text{Ult}(M, E)$ and $\nu(E)$ is the strength of the extender.  Assume that $\mathbb{P}_\mathcal{E}$ has the $\delta$-c.c.  Is $\delta$ Woodin?\end{question}

\section{A criterion for set-genericity over $L$}
  
\begin{proposition}[Kunen]\label{kunenanc}Let $j:\LL\to \LL$ be an elementary embedding with critical point $\kappa$.  Then, for all $\alpha<\kappa^{+\LL}$, $\res{j}{\LL_\alpha}\in \LL$.
\end{proposition}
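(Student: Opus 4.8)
The plan is to exploit that, inside $\LL$, the set $\LL_\alpha$ has cardinality at most $\kappa$ whenever $\alpha<\kappa^{+\LL}$, so that all of $\res{j}{\LL_\alpha}$ is coded by the action of $j$ on a single surjection $\map{f}{\kappa}{\LL_\alpha}$ lying in $\LL$, and that action is in turn recoverable from one set that belongs to $\LL$. This is the classical argument; the only thing to check with care is where the hypothesis $\map{j}{\LL}{\LL}$ is used.

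First I would fix, working in $\LL$, a surjection $\map{f}{\kappa}{\LL_\alpha}$; such an $f$ exists in $\LL$ since $\card{\LL_\alpha}^\LL\le\max(\card{\alpha}^\LL,\aleph_0)\le\kappa$, where the last inequality uses $\alpha<\kappa^{+\LL}$. (If $\alpha\le\kappa$ the assertion is anyway immediate, as $\res{j}{\LL_\kappa}=\id$ because $\kappa=\cp{j}$; but the argument below covers this case too, so no case split is really needed.) Since $\map{j}{\LL}{\LL}$ is elementary and $f\in\LL$, we have $j(f)\in\LL$, and by elementarity $j(f)$ is a surjection of $j(\kappa)$ onto $j(\LL_\alpha)=\LL_{j(\alpha)}$.

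Next I would note that $\res{j}{\kappa}=\id$, so that for every $\xi<\kappa$ one has $j(f)(\xi)=j(f)(j(\xi))=j(f(\xi))$. Consequently the set
\[
g:=\res{j(f)}{\kappa}=\{(\xi,y)\in j(f)\mid \xi<\kappa\}
\]
belongs to $\LL$, being obtained from the $\LL$-set $j(f)$ by Separation in $\LL$, and it is a function on $\kappa$ with $g(\xi)=j(f(\xi))$ for all $\xi<\kappa$. Now fix any $x\in\LL_\alpha$ and let $\xi_x$ be the $<_\LL$-least $\xi<\kappa$ with $f(\xi)=x$ (this exists since $f$ maps onto $\LL_\alpha$). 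Then $j(x)=j(f(\xi_x))=g(\xi_x)$, and hence
\[
\res{j}{\LL_\alpha}=\{(x,g(\xi_x))\mid x\in\LL_\alpha\}.
\]
The right-hand side is definable over $\LL$ from the parameters $f$, $g$, $\LL_\alpha$, and the canonical well-order $<_\LL$, all of which lie in $\LL$; therefore $\res{j}{\LL_\alpha}\in\LL$.

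The one step that genuinely uses the hypothesis — and explains why we need $\map{j}{\LL}{\LL}$ rather than merely an elementary embedding defined on $\LL$ — is $j(f)\in\LL$: we are not asserting that $j$ itself, or even $\res{j}{\LL_\alpha}$ viewed externally, is a priori in $\LL$, only that the single set $g=\res{j(f)}{\kappa}$ needed to reconstruct $\res{j}{\LL_\alpha}$ inside $\LL$ is, and this follows from $j$ sending $\LL$ to $\LL$ together with $\kappa$ being the critical point. Everything else is routine bookkeeping inside $\LL$ using $<_\LL$, so I do not expect any further obstacle.
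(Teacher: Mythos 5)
Your proof is correct and is essentially the paper's own ``ancient Kunen argument'': fix a surjection $f:\kappa\to \LL_\alpha$ in $\LL$, use $j(f)\in\LL$ together with $j\!\restriction\!\kappa=\mathrm{id}$ to recover $j(x)=j(f)(\xi)$ for any $\xi<\kappa$ with $f(\xi)=x$, and conclude that $\res{j}{\LL_\alpha}$ is definable over $\LL$ from $f$ and $j(f)$. Your write-up just makes the bookkeeping (the set $g=\res{j(f)}{\kappa}$ and the choice of $\xi_x$) more explicit than the paper does.
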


\begin{proof} This is by the ``ancient Kunen argument.'' \:   Let $\alpha<\kappa^{+\LL}$ and pick $f:\kappa\to \LL_\alpha$ onto, $f\in \LL$.   Note that for every $x\in \LL_\alpha$, $j(x)=y$ if and only if 
\[\exists\xi<\kappa (x=f(\xi) \wedge y=j(f)(\xi))\] 
Since $j(f)\in \LL$, for every $x\in \LL_\alpha$ we can compute $j(x)$ in $\LL$ from $f$ and $j(f)$.  Therefore, $\res{j}{\LL_\alpha}\in \LL$ as required. 
\end{proof}

\begin{theorem}\label{converse} The following are equivalent for a given real $x\in V$:

\begin{enumerate-(1)}\item $x$ is set-generic over $\LL$
\item There is some $p\in \LL$ such that for all elementary $j:\LL_\alpha\to \LL_\beta$ with critical point $\kappa$, where $j\in \LL$, $\LL_\alpha\models \ZFC^-$ and $p\in \LL_\kappa (\subsetneq \LL_\alpha)$, there is some $\hat{j}:\LL_\alpha[x]\to \LL_\beta[x]$ with $\hat{j}\supset j$ and $\LL_\alpha[x]\models \ZFC^-$.
\end{enumerate-(1)}
\end{theorem}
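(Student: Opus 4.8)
The plan is to prove both directions by bringing the machinery of Section~\ref{woodinmodified} to bear on the model $L$ in place of $V$, using that $L$ satisfies $\GCH$ and hence $\delta^{<\delta}=\delta$ for every regular uncountable $\delta$.

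For the direction ``(2)$\Rightarrow$(1)'': Suppose $x$ has the lifting property witnessed by $p\in L$. Pick $\delta$ regular uncountable in $L$ with $p\in H_\delta^L=L_\delta$, and set $\mu=\omega$. Working inside $L$, let $\mathcal{E}$ be the collection of all elementary $j:L_\alpha\to L_\beta$ with $L_\alpha\models\ZFC^-$, $|L_\beta|<\delta$ (in $L$), and $p\in L_{\kappa_j}$; and let $\nu_{\mathcal{E}}(j)=\kappa_j+1$. By Kunen's proposition~\ref{kunenanc}, each such $j$ restricted to $L_{\kappa_j^{+L}}$ lies in $L$, so $\mathcal{E}$ is genuinely a class-sized-but-$V_\delta$-bounded collection definable over $L$; by Lemma~\ref{conv3} (applied with $H_{\delta^+}^L$ in the role of $H_{\delta^+}$, noting $H_\theta^L=L_\theta$), $\mathcal{E}$ witnesses that $\delta$ is weakly Woodin in $L$. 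Then $\mathbb{P}_{\mathcal{E}}$ is defined in $L$ and, by Lemma~\ref{deltacc}, has the $\delta$-c.c.\ there. Now the hypothesis (2) supplies exactly what Lemma~\ref{key_lemma} needs: for every $j:L_\alpha\to L_\beta$ in $\mathcal{E}$ there is $\hat{j}:L_\alpha[x]\to L_\beta[x]$ extending $j$ with $L_\alpha[x]\models\ZFC^-$. Hence by (the proof of) Lemma~\ref{key_lemma}, $x\models A_{\mathcal{E}}$, and by Lemma~\ref{conv2}, $x$ is $\mathbb{P}_{\mathcal{E}}$-generic over $L$. Since $\mathbb{P}_{\mathcal{E}}\in L$, this shows $x$ is set-generic over $L$.

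For the direction ``(1)$\Rightarrow$(2)'': Suppose $x$ is $\mathbb{P}$-generic over $L$ for some $\mathbb{P}\in L$; choose $\delta$ regular uncountable in $L$ with $|\mathbb{P}|<\delta$, so $\mathbb{P}$ has the $\delta$-c.c.\ in $L$ and $x\in L[x]$ is a $\delta$-c.c.\ extension. Take $p$ to be (a code in $L$ for) $\mathbb{P}$ together with a $\mathbb{P}$-name $\dot{x}$ for $x$. Given any elementary $j:L_\alpha\to L_\beta$ in $L$ with critical point $\kappa$, $L_\alpha\models\ZFC^-$ and $p\in L_\kappa$, we have $\mathbb{P},\dot{x}\in L_\kappa\subseteq L_\alpha$, and $j$ fixes $\mathbb{P}$ and $\dot{x}$. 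Since $L_\alpha\models\ZFC^-$ and $\mathbb{P}\in L_\alpha$, the generic $x$ induces a $\mathbb{P}$-generic filter over $L_\alpha$ (its trace on the maximal antichains of $\mathbb{P}$ lying in $L_\alpha$ is generic, using that there are only $<\delta\le$-many such antichains and $x$ is $\mathbb{P}$-generic over $L\supseteq L_\alpha$), giving $L_\alpha[x]\models\ZFC^-$; and likewise for $L_\beta$. Then the standard lifting criterion applies: since $j(\mathbb{P})=\mathbb{P}$ and $j$ maps the generic filter to the generic filter pointwise, $j$ extends canonically to $\hat{j}:L_\alpha[x]\to L_\beta[x]$ with $\hat{j}(\tau^x)=j(\tau)^x$, which is elementary and restricts to $j$ on $L_\alpha$. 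This establishes (2).

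The main obstacle is the bookkeeping in ``(1)$\Rightarrow$(2)'': one must arrange that the single parameter $p\in L$ simultaneously (i) forces $\mathbb{P}$ and the name $\dot{x}$ to sit below the critical point of \emph{every} relevant $j$, so that $j$ fixes them, and (ii) guarantees $L_\alpha[x]\models\ZFC^-$ for the structures $L_\alpha$ that arise --- the latter requires knowing that $x$ restricts to a genuinely $\mathbb{P}$-generic filter over $L_\alpha$, which uses the $\delta$-c.c.\ of $\mathbb{P}$ in $L$ to control the antichains in $L_\alpha$. In the ``(2)$\Rightarrow$(1)'' direction the only subtlety is checking that $\mathcal{E}$ as defined is a legitimate object for the Section~\ref{woodinmodified} apparatus when the ground model is $L$ rather than an arbitrary $V$; here Kunen's proposition~\ref{kunenanc} and $\GCH$ in $L$ do all the work, so that direction should be essentially a citation of Lemmas~\ref{conv3}, \ref{deltacc}, \ref{conv2} and~\ref{key_lemma}.
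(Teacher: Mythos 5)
Your proposal is correct, and the two directions split as follows. For (2)$\Rightarrow$(1) you are doing essentially what the paper does: its proof is the one-line instruction to define $\mathcal{E}$ inside $\LL$ as in Lemma \ref{conv3} and let Lemmas \ref{deltacc}, \ref{conv2} and \ref{key_lemma} do the work; your write-up just supplies the details (that $\GCH$ in $\LL$ gives $\delta^{<\delta}=\delta$, and that by condensation the hull-generated embeddings of Lemma \ref{conv3} computed in $\LL$ really are of the form $j:\LL_\alpha\to\LL_\beta$ with $j\in\LL$, so hypothesis (2) applies to every member of $\mathcal{E}$); your appeal to Proposition \ref{kunenanc} here is a harmless red herring, as the embeddings in $\mathcal{E}$ are set-sized and already constructed inside $\LL$. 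Where you genuinely diverge is (1)$\Rightarrow$(2): the paper takes $p=\LL_{\mu^{+\LL}}$ with $\mu=\mathrm{Card}^{\LL}(\mathbb{P})$, so that $\mu^{+\LL}<\kappa$ for every relevant $j$, and then builds the lift by verifying $X[x]\cap\beta=X\cap\beta$ for $X=\ran{j}$: for an ordinal name $\tau\in X$ the set $B$ of its possible values has order type $<\mu^{+\LL}\subseteq X$, hence $B\subseteq X$ and $\tau^x\in X$. You instead put $\mathbb{P}$ (and a name $\dot x$) below the critical point and invoke the standard Silver lifting criterion: $j(\mathbb{P})=\mathbb{P}$, $j[G]=G$, so $\hat j(\tau^G)=j(\tau)^G$ is well defined and elementary by the forcing theorem over the $\ZFC^-$ levels $\LL_\alpha$, $\LL_\beta$. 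Both arguments are valid; yours is shorter and more modular, while the paper's is self-contained in the ``ancient Kunen'' style and explicitly controls the ordinals (hence the range) of the lifted hull. Two small clean-ups to yours: the $\delta$-c.c.\ and the count of ``fewer than $\delta$ many antichains'' are not needed to get genericity over $\LL_\alpha$ --- maximality of an antichain $A\in\LL_\alpha$ is absolute, so genericity over $\LL$ already restricts to $\LL_\alpha$ and $\LL_\beta$; and, like the paper's footnote ``we pretend $x$ is the generic filter,'' you are implicitly identifying $x$ with the generic object --- carrying $\dot x$ inside $p$ and noting $\hat j(x)=\hat j(\dot x^G)=j(\dot x)^G=x$ is exactly what guarantees the lift restricts to an elementary map $\LL_\alpha[x]\to\LL_\beta[x]$ as required by (2).
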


\begin{proof}``$\Leftarrow$'':\:  Given $p$, let $\delta\geq\w_1$ be regular with $p\in \LL_\delta$.  Let $\mathcal{E}$ be defined in $\LL$ as in the statement of lemma \ref{conv3}.  Then if $\mathbb{P}_\mathcal{E}$ is defined as in definition \ref{lalg} inside $L$, $x$ is $\mathbb{P}_{\mathcal{E}}$-generic over $\LL$.

\item ``$\Rightarrow$'':\: Let $\mathbb{P}\in \LL$ be such that $x$ is $\mathbb{P}$-generic over $\LL$.  Write $\mu=\text{Card}^\LL(\mathbb{P})$ and let $p= \LL_{\mu^+}$, where $\mu^+=\mu^{+\LL}$.  Let $j:\LL_\alpha\to \LL_\beta$ with $\cp{j}=\kappa$, $\mu^+<\kappa$, $\LL_\alpha\models \ZFC^-$.

\noindent Without loss of generality, let us assume that $\mathbb{P}\in p\in \LL_\kappa$, so that $x$ is $\mathbb{P}$-generic over $\LL_\alpha$.\:  This gives $\LL_\alpha[x]\models \ZFC^-$.  The real $x$ is also $\mathbb{P}$-generic over $\LL_\beta$, and by writing $X=\ran{j}$, $\mathbb{P}\in p\in X$.\:    In order to see that there is $\hat{j}:\LL_\alpha[x]\to \LL_\beta[x]$ with $\hat{j}\supset j$ it suffices to verify that $X[x]\cap\beta=X\cap\beta$\footnote{We pretend $x$ is the $\mathbb{P}$-generic filter.}.  Let $\tau\in X\cap \LL^\mathbb{P}$ be a name for an ordinal.  Note that $B=\{\gamma:\exists p\in\mathbb{P}\: (p\Vdash^\mathbb{P}_{\LL_\beta}\tau=\hat{\gamma})\}\in X$ and  $\otp{B}<\mu^+$.\: Thus, the order isomorphism $\pi:\otp{B}\cong B$ is in $X$, and since $\mu^+<\kappa\subset X$, we have $\otp{B}\cup\{\otp{B}\}\subseteq X$.  Hence $B\subseteq X$ and, in particular, $\tau^x\in X$.  \end{proof}

The paramater $p$ as in the statement (2) above is necessary.\:  Suppose that $j:\LL_\alpha\to \LL_\beta$ is an elementary embedding with critical point $\kappa$, and let $x$ be $\Col(\w,\kappa)$-generic over $\LL$.  In this case, such an embedding cannot be lifted to a $\hat{j}:\LL_\alpha[x]\to \LL_\beta[x]$ because $\cp{j}^{\LL_\alpha[x]}$ is countable.\: 

By Theorem \ref{converse}, if $x \in {\mathbb R} \cap V$ is such that (2)
of the statement of Theorem \ref{converse} holds true and $0^\#$ exists,
then $x^\#$ exists.

\bibliography{paper2}
  
\end{document}